\author{\Large{Damanvir Singh Binner}}
\begin{document}

\theoremstyle{plain}
\newtheorem{theorem}{Theorem}
\newtheorem{corollary}[theorem]{Corollary}
\newtheorem{lemma}[theorem]{Lemma}
\newtheorem{proposition}[theorem]{Proposition}

\theoremstyle{definition}
\newtheorem{defn}{Definition}
\newtheorem{example}[theorem]{Example}
\newtheorem{conjecture}[theorem]{Conjecture}
\newtheorem{question}[theorem]{Question}

\theoremstyle{remark}
\newtheorem{remark}[theorem]{Remark}

\begin{center}
\vskip 1cm{\LARGE\bf 
On conjectures of Chern concerning parity bias in partitions \\ 
\vskip .1in

}
\vskip 1cm
\large
Damanvir Singh Binner \\
Department of Mathematics\\
Sant Longowal Institute of Engineering and Technology, India \\
damanvirbinnar@iisermohali.ac.in
\end{center}

\begin{abstract}
We prove recent conjectures of Chern concerning nonnegativity of a certain $q$-series related to parity bias in integer partitions. 
\end{abstract}

\section{Introduction}

Recently, Kim, Kim, and Lovejoy \cite{Lovejoy} studied the phenomenon of parity bias in integer partitions. Let $p_o(n)$ denote the number of partitions of $n$ that have more odd parts than even parts and let $p_e(n)$ denote the number of partitions of $n$ that have more even parts than odd parts. 

\begin{theorem}[B. Kim, E. Kim and Lovejoy (2020)]
\label{Jeremy}
For any $n \neq 2$, we have $$p_o(n) \geq p_e(n).$$
\end{theorem}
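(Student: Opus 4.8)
The plan is to pass to generating functions that track the numbers of odd and even parts separately, reducing the theorem to a single statement about nonnegativity of the coefficients of one explicit $q$-series. Since the partitions of $n$ into exactly $i$ odd parts are generated by $q^{i}/(q^2;q^2)_i$ and those into exactly $j$ even parts by $q^{2j}/(q^2;q^2)_j$ (here $(a;q)_m=\prod_{k=0}^{m-1}(1-aq^k)$), summing the product over $i>j$ and over $j>i$ produces $\sum_n p_o(n)q^n$ and $\sum_n p_e(n)q^n$ respectively. Subtracting and relabeling $i\leftrightarrow j$ in the second sum, I would obtain
\[
\sum_{n\ge 0}\bigl(p_o(n)-p_e(n)\bigr)q^n=\sum_{i>j\ge 0}\frac{q^{i+2j}-q^{2i+j}}{(q^2;q^2)_i\,(q^2;q^2)_j}=\sum_{i>j\ge0}\frac{q^{i+j}\,(q^{j}-q^{i})}{(q^2;q^2)_i\,(q^2;q^2)_j}.
\]
This derivation is routine; a quick check of the first few coefficients gives $q+2q^{3}+q^{4}+\cdots$, already matching the known values and, crucially, exhibiting a vanishing $q^2$-coefficient.

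Thus the theorem reduces to showing that the displayed series has nonnegative coefficients as a formal power series in $q$. The subtlety — and what I expect to be the main obstacle — is that although each summand $q^{i+j}(q^{j}-q^{i})/[(q^2;q^2)_i(q^2;q^2)_j]$ is a positive real number for $0<q<1$ (because $i>j$), the factor $q^{j}-q^{i}=q^{j}(1-q^{i-j})$ contributes a genuine negative term to the power series, so coefficientwise nonnegativity can only come from cancellation between different summands. The naive combinatorial shortcut, namely splitting a single even part $2s$ into the odd parts $2s-1$ and $1$, raises the number of odd parts minus the number of even parts by only $3$ and hence fails to correct a large even surplus (for instance it sends $2+2+2$ to the balanced $2+2+1+1$), confirming that no purely local move will suffice and that the cancellation is essential.

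To extract the cancellation I would organize the sum along the diagonals $r=i-j\ge 1$, where the contribution of diagonal $r$ is $\sum_{j\ge0}q^{3j+r}(1-q^{r})/[(q^2;q^2)_{j+r}(q^2;q^2)_j]$, and then telescope or pair consecutive values of $r$ so that the negative part $-q^{r}$ of one diagonal is absorbed by the positive part of the next. Equivalently, guided by the fact that $j$ odd parts are \emph{cheaper} (weight $q^{j}$) than $j$ even parts (weight $q^{2j}$), I would look for an explicit injection on the underlying pairs (partition into odd parts, partition into even parts) that increases the odd-part count while preserving the size, now designed globally rather than by a single split. Either route should yield a manifestly nonnegative representation of the series.

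Finally I would record the behavior at small $n$ to explain the excluded value: the vanishing of the $q^2$-coefficient computed above reflects $p_o(2)=p_e(2)=1$, the unique case of equality, while a leftover strictly positive term in the nonnegative representation should give $p_o(n)>p_e(n)$ for all other $n\ge1$. This simultaneously proves the stated inequality and isolates $n=2$ as the sole boundary case.
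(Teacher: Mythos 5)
Your generating-function setup is correct, and it retraces exactly the known reduction: after the diagonal rearrangement $r=i-j$, the series you arrive at,
\[
\sum_{n\ge 0}\bigl(p_o(n)-p_e(n)\bigr)q^n
=\sum_{r\ge 1}\sum_{j\ge 0}\frac{q^{3j+r}(1-q^{r})}{(q^2;q^2)_{j+r}\,(q^2;q^2)_j},
\]
is precisely the double series of Conjecture \ref{Chern2} in the case $m=2$ (for $m=2$ Chern's prefactor $(q,q^2;q^m)_\infty/(q;q)_\infty$ equals $1$). This is the same route the paper follows: Theorem \ref{Jeremy} is the case $(a,b,m)=(1,2,2)$ of Theorem \ref{Shane}, and Chern reduced that case to the nonnegativity of exactly this series. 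The genuine gap is that you stop exactly where the work begins. Coefficientwise nonnegativity of this series is not a routine cancellation; it was an open conjecture of Chern, and proving it is the entire content of this paper. Your proposal names two unexecuted strategies (``telescope or pair consecutive values of $r$,'' or ``look for an explicit injection'') and asserts that either ``should yield a manifestly nonnegative representation''; nothing is actually proved.

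Moreover, the one concrete mechanism you describe --- absorbing the negative part of diagonal $r$ into the positive part of diagonal $r+1$ --- fails. At $j=0$, the negative piece of diagonal $r=2$ is $-q^{4}/(q^2;q^2)_{2}$, while the positive piece of diagonal $r=3$ is $q^{3}/(q^2;q^2)_{3}$, whose expansion contains only odd powers up to that range; their sum has coefficient $-1$ at $q^4$ (the difference involves $q^3(1-q)/(q^2;q^2)_3$, which carries a factor $1/(1+q)$ with alternating signs). The deficit at $q^4$ is in fact covered by diagonal $2r=4$, not $r+1=3$: the correct pairing is the doubling $r\mapsto 2r$ at fixed $j$. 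The paper implements this cleanly via the factorization $(q^2;q^2)_{j+k}=(q^2;q^2)_j\,(q^{2(j+1)};q^2)_k$, which reduces everything to the single series $\sum_{k\ge 1}q^k(1-q^k)/(q^s;q^m)_k$ of Conjecture \ref{Chern}. There the coefficient of $q^n$ is $\sum_{k\ge1}\bigl(P_{s,m,k}(n-k)-P_{s,m,k}(n-2k)\bigr)$, where $P_{s,m,k}(n)$ counts partitions of $n$ into parts from $\{s,s+m,\ldots,s+(k-1)m\}$; attaching each negative term $-P_{s,m,k}(n-2k)$ to the index $2k$ rewrites the sum as
\[
\sum_{\substack{k\ge1\\ k\text{ odd}}}P_{s,m,k}(n-k)
+\sum_{\substack{k\ge2\\ k\text{ even}}}\bigl(P_{s,m,k}(n-k)-P_{s,m,k/2}(n-k)\bigr),
\]
which is termwise nonnegative because $P_{s,m,k}(n)$ is increasing in $k$. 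This also supplies, in explicit form, the injection you hoped for: every partition counted by $P_{s,m,k/2}(n-k)$ is counted by $P_{s,m,k}(n-k)$. Without this (or an equivalent) argument, your proposal does not prove the theorem.
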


This result was generalized by Kim and Kim \cite{Kim} in a subsequent work. Let $p_{a,b,m}(n)$ denote the number of partitions of $n$ that have more parts congruent to $a$ modulo $m$ than parts congruent to $b$ modulo $m$.

\begin{theorem}[B. Kim and E. Kim (2021)]
\label{Ekim}
Let $m \geq 2$ be an integer. Then,
\begin{align*}
p_{1,m}(n) &\geq p_{m,1}(n), \\
p_{1,m-1}(n) &\geq p_{m-1,1}(n).
\end{align*}
\end{theorem}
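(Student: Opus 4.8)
The plan is to encode each inequality as the coefficientwise nonnegativity of a signed generating function and then to isolate the two residue classes being compared. Fix $m\ge 2$ and, for a partition $\lambda$, let $n_a(\lambda)$ denote the number of its parts congruent to $a$ modulo $m$. I treat both inequalities uniformly as $p_{1,\rho}(n)\ge p_{\rho,1}(n)$, where $\rho$ is the second residue (reading the subscript $m$ in the theorem as the residue $0$), so $\rho\in\{0,m-1\}$; the parts counted by $\rho$ are then exactly $c,c+m,c+2m,\dots$ with least value $c=m$ (respectively $c=m-1$). Since a partition $\lambda$ contributes $+1,-1,0$ to $p_{1,\rho}(n)-p_{\rho,1}(n)$ according as $n_1(\lambda)-n_\rho(\lambda)$ is positive, negative, or zero, I have
\[
\sum_{n\ge0}\bigl(p_{1,\rho}(n)-p_{\rho,1}(n)\bigr)q^n=\sum_{\lambda}\operatorname{sgn}\bigl(n_1(\lambda)-n_\rho(\lambda)\bigr)q^{|\lambda|}.
\]
The parts lying in residues other than $1$ and $\rho$ do not affect the sign, so they factor out as $\prod_{j\not\equiv 1,\rho\,(m)}(1-q^j)^{-1}$, a series with nonnegative coefficients. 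It therefore suffices to prove that the remaining factor has nonnegative coefficients.

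Grouping the surviving partitions by the number $r$ of parts $\equiv 1$ and the number $s$ of parts $\equiv\rho$, and using that partitions into exactly $r$ parts $\equiv 1 \pmod m$ are generated by $q^{r}/(q^m;q^m)_r$ and those into exactly $s$ parts $\equiv\rho$ by $q^{cs}/(q^m;q^m)_s$, the factor to control becomes
\[
H(q)=\sum_{r,s\ge0}\operatorname{sgn}(r-s)\,\frac{q^{r}}{(q^m;q^m)_r}\,\frac{q^{cs}}{(q^m;q^m)_s}.
\]
Antisymmetrizing this diagonally symmetric sum, i.e.\ pairing $(r,s)$ with $(s,r)$, and using $s+cr-(r+cs)=(c-1)(r-s)>0$, gives
\[
H(q)=\sum_{r>s\ge0}\frac{q^{\,r+cs}\bigl(1-q^{(c-1)(r-s)}\bigr)}{(q^m;q^m)_r\,(q^m;q^m)_s}.
\]
The problem is thereby reduced to showing $H(q)\succeq 0$, meaning every coefficient is nonnegative.

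The main obstacle is already visible: the factors $1-q^{(c-1)(r-s)}$ are \emph{not} individually nonnegative, so the summands carry negative coefficients and the nonnegativity of $H$ can only come from global cancellation among different $(r,s)$. I expect this to be the crux of the whole argument. I would resolve it combinatorially, by constructing a single size-preserving injection from the partitions (into parts $\equiv 1,\rho$) with $n_\rho>n_1$ into those with $n_1>n_\rho$. The structural reason such an injection should exist is that residue $1$ admits the part $1$, strictly smaller than the least part $c\ge 2$ available in residue $\rho$; this slack is exactly what lets one trade an excess $\rho$-part for extra $1$-parts of the same total size, and it is precisely the analytic asymmetry between the exponents $q^{r}$ and $q^{cr}$ recorded above. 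Concretely I would act on the extremal parts, converting the smallest $\rho$-part into a bounded number of $1$-parts (using ones as filler) while recording just enough data to reverse the move; the delicate step, and the one most likely to require care, is guaranteeing injectivity when newly created $1$-parts collide with existing ones.

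An appealing alternative that would sidestep this bookkeeping is a Lindström--Gessel--Viennot model: writing $u_j=q^{j}/(q^m;q^m)_j$ and $w_j=q^{cj}/(q^m;q^m)_j$, we have $H(q)=\sum_{r>s}\det\!\left(\begin{smallmatrix}u_r&u_s\\ w_r&w_s\end{smallmatrix}\right)$, so it would suffice to realize $u_j$ and $w_j$ as weighted counts of lattice paths from two fixed sources to a sink indexed by $j$, with the geometry arranged so that the unwanted source--sink matching always forces a crossing. Then $H(q)$ would enumerate non-intersecting path pairs and be manifestly nonnegative, with the negative contributions cancelling automatically. The common factor $(q^m;q^m)_j^{-1}$ and the mere shift $q^{j}\mapsto q^{cj}$ strongly suggest such a model exists, with the two sources placed at different heights to account for the differing entry costs. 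Either route handles both inequalities at once, the only degenerate case being $m=2$ in the second inequality, where $m-1=1$ and the statement reduces to a trivial equality.
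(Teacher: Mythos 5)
Your reduction is sound as far as it goes, but it stops exactly where the theorem's difficulty begins. The sign encoding, the factoring out of the residue classes other than $1$ and $\rho$, the generating functions $q^{r}/(q^m;q^m)_r$ and $q^{cs}/(q^m;q^m)_s$, and the antisymmetrization to $H(q)=\sum_{r>s\ge0}q^{r+cs}\bigl(1-q^{(c-1)(r-s)}\bigr)/\bigl((q^m;q^m)_r(q^m;q^m)_s\bigr)$ are all correct; indeed, substituting $s=j$, $r=j+k$ shows that your $H(q)$ with $c=2$ is \emph{literally} the double series of Conjecture \ref{Chern2}, so this part of the argument is essentially Chern's own reduction. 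But the nonnegativity of $H(q)$ --- which you yourself identify as ``the crux of the whole argument'' --- is never proved. The injection is described only as an intention (``I would act on the extremal parts\dots''), with the acknowledged hard point (injectivity when new $1$-parts collide with existing ones) left unresolved, and the Lindstr\"om--Gessel--Viennot alternative is the hope that a suitable path model ``should exist,'' with no sources, sinks, steps, or weights specified. Reducing the statement to a known hard nonnegativity claim and then sketching two unexecuted strategies is not a proof. This cannot be waved through: the instances of your $H(q)$ needed for this theorem include $c=2$ (namely $m=2$ in the first inequality, which is exactly Theorem \ref{Jeremy}, and $m=3$ in the second), i.e.\ precisely the configuration $(a,b)=(1,2)$ for which Chern's $q$-series method failed and his paper resorted to lengthy combinatorial arguments of just the kind you are proposing.

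For contrast, the paper closes exactly this gap with a short count rather than an injection, and the same count covers your general $c\ge 2$. Factor $(q^m;q^m)_r=(q^m;q^m)_s\,(q^{(s+1)m};q^m)_{r-s}$ and put $k=r-s$, so that
\[
H(q)=\sum_{s\ge 0}\frac{q^{(c+1)s}}{(q^m;q^m)_s^2}\sum_{k\ge 1}\frac{q^{k}\bigl(1-q^{(c-1)k}\bigr)}{(q^{(s+1)m};q^m)_k},
\]
and it suffices that each inner sum has nonnegative coefficients. Writing $P_{\sigma,m,k}(n)$ for the number of partitions of $n$ into parts from $\{\sigma,\sigma+m,\dots,\sigma+(k-1)m\}$ with $\sigma=(s+1)m$, the coefficient of $q^n$ in the inner sum is $\sum_{k\ge1}\bigl(P_{\sigma,m,k}(n-k)-P_{\sigma,m,k}(n-ck)\bigr)$; reindexing the subtracted terms via $k\mapsto k/c$ turns this into $\sum_{c\nmid k}P_{\sigma,m,k}(n-k)+\sum_{c\mid k}\bigl(P_{\sigma,m,k}(n-k)-P_{\sigma,m,k/c}(n-k)\bigr)$, which is nonnegative because $P_{\sigma,m,k}(n)$ is nondecreasing in $k$. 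Note that the cancellation here is organized within each fixed $s$, so no global injection and no collision bookkeeping are needed --- this is the ingredient your write-up is missing, and without it (or something equivalent, such as quoting Theorem \ref{Shane}) your proposal does not establish the theorem.
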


In a very recent work, Chern \cite{Chern} obtained a strong generalization of these results.  

\begin{theorem}[Chern (2022)]
\label{Shane}
Let $m \geq 2$ be an integer. Then, for any integers $a$ and $b$ with $1 \leq a < b \leq m$, $$ p_{a,b,m}(n) \geq p_{b,a,m}(n). $$ 

\end{theorem}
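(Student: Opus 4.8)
The plan is to pass to generating functions and reduce the claim to the nonnegativity of an explicit $q$-series supported on the two distinguished residue classes. Fix $m$ and $1\le a<b\le m$, and for a partition $\lambda$ let $n_a(\lambda)$ and $n_b(\lambda)$ denote its number of parts congruent to $a$ and to $b$ modulo $m$. Tracking the statistic $n_a-n_b$ by a variable $z$ and the size by $q$, the full generating function factors as $F(z,q)=H(z,q)\,G(q)$, where
\[
H(z,q)=\prod_{k\ge0}\frac{1}{1-zq^{a+mk}}\cdot\prod_{k\ge0}\frac{1}{1-z^{-1}q^{b+mk}}
\]
collects the parts congruent to $a$ and $b$, while $G(q)$ collects all remaining residue classes. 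Now $\sum_n\bigl(p_{a,b,m}(n)-p_{b,a,m}(n)\bigr)q^n$ is obtained from $F$ by subtracting the negative-$z$ part from the positive-$z$ part; since $G(q)$ is $z$-free and a product of geometric series, it has nonnegative coefficients. Hence it suffices to prove that the analogous signed series for $H$ alone, namely
\[
\Phi(q):=\sum_{j\ge1}\bigl([z^{j}]-[z^{-j}]\bigr)H(z,q),
\]
has nonnegative coefficients.

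Next I would expand $H$ by the $q$-binomial theorem. Writing $Q=q^m$ and $(Q;Q)_i=\prod_{k=1}^{i}(1-Q^k)$, the identity $\prod_{k\ge0}(1-xQ^k)^{-1}=\sum_{i\ge0}x^i/(Q;Q)_i$ gives the $z^{i-j}$-contribution $q^{ai+bj}/\bigl((Q;Q)_i(Q;Q)_j\bigr)$, where $i$ and $j$ record the number of parts congruent to $a$ and to $b$, and each factor $1/(Q;Q)_i$ generates the associated ``levels.'' Collecting the terms with $i>j$ against those with $i<j$ and using $a<b$ yields the compact form
\[
\Phi(q)=\sum_{i>j\ge0}\frac{q^{ai+bj}\bigl(1-q^{(b-a)(i-j)}\bigr)}{(Q;Q)_i\,(Q;Q)_j},
\]
so the entire theorem reduces to showing that this single series has nonnegative coefficients.

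The real difficulty lies in establishing the nonnegativity of $\Phi$. No individual summand is nonnegative: the factor $1-q^{(b-a)(i-j)}$ contributes a negative monomial whose exponent lies in the residue class $(b-a)(i-j)\pmod m$, whereas the denominators $(Q;Q)_i(Q;Q)_j$ produce only exponents divisible by $m$, so there is no cancellation within a fixed pair $(i,j)$. The negative contributions must therefore be absorbed by \emph{combining different pairs}, and the obstruction is precisely that $b-a$ need not be divisible by $m$: one cannot rebalance the size by moving a single part between the two classes while staying inside them. The route I foresee is a weight-preserving injection on pairs of partitions carrying each configuration with $j>i$ (more parts congruent to $b$) to one with $i>j$ (more parts congruent to $a$). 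Since the smallest size-preserving exchange between the two classes involves a block of $m/\gcd(b-a,m)$ parts, reflecting the cyclic subgroup of $\mathbb{Z}/m\mathbb{Z}$ generated by $b-a$, the injection should transfer parts in such blocks; I would expect to organize the construction by the value of $i-j$ or by induction on the number of parts lying in the two distinguished classes. Proving that this map is well defined and injective for \emph{all} admissible $(a,b,m)$ is the main obstacle, and once $\Phi(q)\ge0$ is secured the reduction of the first paragraph completes the proof.
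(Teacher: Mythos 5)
Your first two paragraphs are correct, and they reproduce essentially Chern's own reduction: at $(a,b)=(1,2)$ your $\Phi(q)$ is exactly the double series $\sum_{j\ge0}\sum_{k\ge1}q^{3j+k}(1-q^k)/\bigl((q^m;q^m)_j(q^m;q^m)_{j+k}\bigr)$ that Chern isolated and whose nonnegativity he left as a conjecture. But this is precisely where the proposal stops being a proof. Everything after the reduction is a plan: you posit a weight-preserving injection transferring blocks of $m/\gcd(b-a,m)$ parts between the residue classes $a$ and $b$, you do not construct it, and you state yourself that proving it well defined and injective is the main obstacle. Since the nonnegativity of $\Phi$ \emph{is} the entire content of the theorem beyond the routine generating-function setup, the central claim here is conjectured, not proven; this is a genuine gap.

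The gap can be closed far more simply than your road map suggests, and by a different grouping of terms. Writing $k=i-j$ and factoring $(q^m;q^m)_i=(q^m;q^m)_j\,(q^{(j+1)m};q^m)_k$, your series becomes
\[
\Phi(q)=\sum_{j\ge0}\frac{q^{(a+b)j}}{(q^m;q^m)_j^2}\sum_{k\ge1}\frac{q^{ak}\bigl(1-q^{(b-a)k}\bigr)}{(q^{(j+1)m};q^m)_k},
\]
so for $(a,b)=(1,2)$ (which, by Chern's analysis, is the one case needed to make his $q$-series argument uniform) it suffices that $\sum_{k\ge1}q^k(1-q^k)/(q^s;q^m)_k$ have nonnegative coefficients for all $s,m\ge1$. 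Letting $P_{s,m,k}(n)$ denote the number of partitions of $n$ into parts from $\{s,s+m,\dots,s+(k-1)m\}$, the coefficient of $q^n$ in that inner series is $\sum_{k\ge1}\bigl(P_{s,m,k}(n-k)-P_{s,m,k}(n-2k)\bigr)$; reindexing the subtracted terms by $k\mapsto k/2$ over even $k$ turns this into $\sum_{k\text{ odd}}P_{s,m,k}(n-k)+\sum_{k\text{ even}}\bigl(P_{s,m,k}(n-k)-P_{s,m,k/2}(n-k)\bigr)$, which is nonnegative because $P_{s,m,k}(n)$ increases with $k$. Note that the successful pairing combines, for fixed $j$, the index $k$ with $2k$; nothing is ever exchanged between the residue classes $a$ and $b$, and no $\gcd$ enters. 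Two further cautions about your intended route: cancellation within a fixed pair $(i,j)$ can in fact occur (whenever $m\mid(b-a)(i-j)$), and for $a\ge2$ the analogue of the reindexing above would require $k\mapsto (b/a)k$, which is not an integer map in general, so attacking $\Phi\ge0$ head-on for every $(a,b)$, as your reduction demands, is genuinely harder than the route actually taken (Chern's separate treatment of $(a,b)\ne(1,2)$ together with the $(1,2)$ series handled above).
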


Note that Theorem \ref{Jeremy} can be obtained by setting $(a,b,m) = (1,2,2)$ in Theorem \ref{Shane} while Theorem \ref{Ekim} can be obtained by setting $(a,b) = (1,m)$ and $(a,b) = (1,m-1)$ in Theorem \ref{Shane}. 

Chern \cite[Section 2]{Chern} proves Theorem \ref{Shane} using $q$-series techniques for any $(a,b) \neq (1,2)$. The author \cite[Section 3]{Chern} follows a completely different approach involving some $q$-series analysis followed by some lengthy combinatorial arguments to handle the case $(a,b) = (1,2)$. Finally, the author \cite[Section 4]{Chern} states that to handle all the cases uniformly using the approach described in \cite[Section 2]{Chern}, one needs to prove that the following $q$-series 
$$ \frac{(q,q^2;q^m)_{\infty}}{(q;q)_{\infty}} \sum_{j \geq 0} \sum_{k \geq 1} \frac{q^{3j+k}(1-q^k)}{(q^m;q^m)_j (q^m;q^m)_{j+k}}. $$ has nonnegative coefficients in its expansion. To prove this, it is clearly sufficient to prove the following statement. 

\begin{conjecture}[Chern (2022)]
\label{Chern2}
For $m \geq 2$, the double series $$ \sum_{j \geq 0} \sum_{k \geq 1} \frac{q^{3j+k}(1-q^k)}{(q^m;q^m)_j (q^m;q^m)_{j+k}} $$ has nonnegative coefficients in its expansion.
\end{conjecture}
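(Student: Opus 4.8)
The plan is to peel off the factor $1-q^k$ and reduce the whole problem to a single coefficientwise comparison between two manifestly nonnegative series. Splitting the summand, set
$$ \mathcal{A} = \sum_{j \geq 0} \sum_{k \geq 1} \frac{q^{3j+k}}{(q^m;q^m)_j (q^m;q^m)_{j+k}}, \qquad \mathcal{B} = \sum_{j \geq 0} \sum_{k \geq 1} \frac{q^{3j+2k}}{(q^m;q^m)_j (q^m;q^m)_{j+k}}, $$
so that the series in the conjecture equals $\mathcal{A} - \mathcal{B}$. Each of $\mathcal{A}$ and $\mathcal{B}$ has nonnegative coefficients, since $\frac{1}{(q^m;q^m)_\ell} = \sum_{\mu} q^{m|\mu|}$, the sum being over partitions $\mu$ whose parts are at most $\ell$. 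The genuine difficulty is therefore the minus sign, and everything comes down to proving the coefficientwise inequality $\mathcal{B} \preceq \mathcal{A}$, where $f \preceq g$ means that $g-f$ has nonnegative coefficients.

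The tool I would use is a monotonicity lemma: if $a \leq b$, then $\frac{1}{(q^m;q^m)_a} \preceq \frac{1}{(q^m;q^m)_b}$. This is immediate from
$$ \frac{1}{(q^m;q^m)_b} = \frac{1}{(q^m;q^m)_a} \cdot \prod_{i=a+1}^{b} \frac{1}{1-q^{mi}}, $$
the second factor being a power series with nonnegative coefficients and constant term $1$.

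The key idea that resolves the subtraction is to compare the $(j,k)$-term of $\mathcal{B}$ not with the matching term of $\mathcal{A}$ but with the $(j,2k)$-term of $\mathcal{A}$. These two terms share the numerator $q^{3j+2k}$ and the factor $(q^m;q^m)_j$, and since $j+k \leq j+2k$ the monotonicity lemma gives
$$ \frac{q^{3j+2k}}{(q^m;q^m)_j (q^m;q^m)_{j+k}} \preceq \frac{q^{3j+2k}}{(q^m;q^m)_j (q^m;q^m)_{j+2k}}. $$
Summing the left-hand sides over all $j \geq 0$ and $k \geq 1$ recovers $\mathcal{B}$, while the right-hand sides are precisely the terms of $\mathcal{A}$ indexed by $(j,2k)$. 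Since the reindexing $(j,k) \mapsto (j,2k)$ is injective and every term of $\mathcal{A}$ has nonnegative coefficients, these right-hand terms constitute a subfamily of the defining terms of $\mathcal{A}$, so that
$$ \mathcal{B} \preceq \sum_{j \geq 0} \sum_{k \geq 1} \frac{q^{3j+2k}}{(q^m;q^m)_j (q^m;q^m)_{j+2k}} \preceq \mathcal{A}, $$
which is exactly $\mathcal{A} - \mathcal{B} \succeq 0$.

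The arithmetic above is routine; the one point that warrants care — and the step I expect to be the crux — is checking that the map $(j,k) \mapsto (j,2k)$ really lands inside the index set of $\mathcal{A}$ and is injective, so that no term of $\mathcal{A}$ is counted twice. In the partition-theoretic language this is transparent: given a quadruple $(j,k,\mu,\nu)$ counted by $\mathcal{B}$, with $\mu$ having parts $\leq j$ and $\nu$ having parts $\leq j+k$, the quadruple $(j,2k,\mu,\nu)$ is counted by $\mathcal{A}$ because the parts of $\nu$ satisfy $\leq j+k \leq j+2k$, and this assignment is plainly injective and degree-preserving. Once this bookkeeping is recorded, the nonnegativity follows for every $m \geq 2$ at once, with no case analysis and no reliance on the value of $m$.
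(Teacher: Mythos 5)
Your proof is correct, and it is essentially the paper's argument in different packaging: matching each negative $(j,k)$-term against the positive $(j,2k)$-term and invoking coefficientwise monotonicity of $1/(q^m;q^m)_\ell$ in $\ell$ is exactly the paper's pairing of $P_{s,m,k}(n-2k)$ with $P_{s,m,2k}(n-2k)$ via the fact that $P_{s,m,k}(n)$ increases in $k$. The only difference is that you argue directly on the double series in $q$-series language, whereas the paper first factors out $(q^m;q^m)_j$ to reduce to the inner series $\sum_{k \geq 1} q^k(1-q^k)/(q^{(j+1)m};q^m)_k$ and then runs the same comparison in partition-counting language.
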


Further, the author notes that we can rearrange the terms of this double sum as follows. $$ \sum_{j \geq 0} \sum_{k \geq 1} \frac{q^{3j+k}(1-q^k)}{(q^m;q^m)_j (q^m;q^m)_{j+k}} = \sum_{j \geq 0}   \frac{q^{3j}}{(q^m;q^m)_j (q^m;q^m)_j} \sum_{k \geq 1}  \frac{q^k(1-q^k)}{(q^{(j+1)m};q^m)_k}. $$ From here, it is clearly sufficient to prove the nonnegativity of the inner series.

\begin{conjecture}[Chern (2022)]
\label{Chern}
For $m,s \geq 1$, the $q$-series $$ \sum_{k \geq 0} \frac{q^k(1-q^k)}{(q^s;q^m)_k} $$ has nonnegative coefficients in its expansion.
\end{conjecture}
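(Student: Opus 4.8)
The plan is to prove the stronger coefficientwise statement by an elementary domination argument. First I would discard the vanishing $k=0$ term (since $1-q^{0}=0$) and split the series as $A-B$, where $A=\sum_{k\ge1} q^{k}/(q^s;q^m)_k$ and $B=\sum_{k\ge1} q^{2k}/(q^s;q^m)_k$. Both $A$ and $B$ individually have nonnegative coefficients, so the whole content is the inequality $A\succeq B$, where I write $F\succeq G$ to mean that $F-G$ has nonnegative coefficients. For each fixed $n$ only finitely many $k$ contribute to $[q^n]A$ and $[q^n]B$, so these are genuine formal power series and the comparison makes sense termwise.

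The key observation is that raising the truncation index from $k$ to $2k$ accomplishes two things at once: it supplies exactly the extra factor needed to match $q^{2k}$, and it only \emph{enlarges} the set of available parts $\{s,s+m,\dots,s+(k-1)m\}$. Concretely, since $(q^s;q^m)_{2k}=(q^s;q^m)_k\prod_{i=k}^{2k-1}(1-q^{s+im})$, the quotient $1/\prod_{i=k}^{2k-1}(1-q^{s+im})$ is a power series with nonnegative coefficients and constant term $1$, whence $\tfrac{1}{(q^s;q^m)_{2k}}-\tfrac{1}{(q^s;q^m)_k}\succeq 0$. Multiplying by $q^{2k}$ gives $\frac{q^{2k}}{(q^s;q^m)_k}\preceq\frac{q^{2k}}{(q^s;q^m)_{2k}}$ for every $k\ge1$.

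Summing this over $k$ and re-indexing would then give
\begin{equation*}
B=\sum_{k\ge1}\frac{q^{2k}}{(q^s;q^m)_k}\ \preceq\ \sum_{k\ge1}\frac{q^{2k}}{(q^s;q^m)_{2k}}=\sum_{\substack{k'\ge2\\ k'\ \mathrm{even}}}\frac{q^{k'}}{(q^s;q^m)_{k'}}\ \preceq\ \sum_{k'\ge1}\frac{q^{k'}}{(q^s;q^m)_{k'}}=A,
\end{equation*}
the last step just restoring the (nonnegative) odd-indexed terms. This chain yields $A\succeq B$, which is exactly the desired nonnegativity. Equivalently, one can phrase the whole argument as an explicit injection $\Phi$ on the underlying lattice-point data: an object contributing to $[q^n]B$ is a pair $(k,\mathbf c)$ with $\mathbf c=(c_0,\dots,c_{k-1})\in\mathbb Z_{\ge0}^{k}$ and $2k+\sum_i c_i(s+im)=n$, and $\Phi$ sends it to the pair $(2k,\tilde{\mathbf c})$ obtained by appending $k$ zeros to $\mathbf c$. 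This map is weight-preserving and manifestly injective, since $2k$ (and then $\mathbf c$, by truncation) can be read off from the image.

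I do not expect a serious obstacle here: once the doubling trick is spotted, the only things to verify are the two coefficientwise inequalities above, both of which reduce to the routine fact that reciprocals of $(q^{\,\cdot};q^m)$-products have nonnegative coefficients. The single point that deserves care is the bookkeeping—checking that the re-indexing $k\mapsto 2k$ lands in \emph{distinct} even indices so that no cancellation or double counting occurs—but this is immediate. If one wanted a sharper (say exactly bijective or closed-form) description for the application to Theorem~\ref{Shane}, more work might be required; but for the bare nonnegativity asserted in Conjecture~\ref{Chern}, this domination argument already suffices.
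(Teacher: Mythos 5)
Your proof is correct and is essentially the paper's own argument in generating-function clothing: your doubling step $\frac{q^{2k}}{(q^s;q^m)_k}\preceq\frac{q^{2k}}{(q^s;q^m)_{2k}}$, followed by re-indexing to even $k'$ and restoring the odd terms, is exactly the paper's rearrangement of the coefficient sum into odd-indexed terms plus differences $P_{s,m,k}(n-k)-P_{s,m,k/2}(n-k)$, with your ``append $k$ zeros'' injection playing the role of the paper's observation that $P_{s,m,k}(n)$ is increasing in $k$. No gap; the two proofs coincide in substance.
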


In this note, we obtain a very short and simple counting proof of Conjecture \ref{Chern}. As described above, this leads to a proof of Conjecture \ref{Chern2} as well, and also provides a uniform proof for the cases $(a,b) \neq (1,2)$ and $(a,b) = (1,2)$ in the proof of Theorem \ref{Shane}. This greatly simplifies the proof of Theorem \ref{Shane} provided in \cite{Chern}. 

\section{Proof of Conjecture \ref{Chern}}
In this section, we prove Conjecture \ref{Chern}.  

\begin{proof}[Proof of Conjecture \ref{Chern}]
Let $P_{s,m,k}(n)$ denote the number of partitions of $n$ with all parts lying in $\{s,s+m,s+2m, \ldots, s+m(k-1)\}$. Then, the coefficient of $q^n$ in the series  $$ \sum_{k \geq 0} \frac{q^k(1-q^k)}{(q^s;q^m)_k} $$ is given by
\begin{align*}
&\sum_{k \geq 1} P_{s,m,k}(n-k) - P_{s,m,k}(n-2k) \\
=& \sum_{\substack{k \geq 1, \\ k \text{ odd}}}  P_{s,m,k}(n-k) + \sum_{\substack{k \geq 2, \\ k \text{ even}}}  \left(P_{s,m,k}(n-k) -  P_{s,m,\frac{k}{2}}(n-k) \right),
\end{align*}
which is easily seen to be nonnegative using the fact that for given $s$, $m$ and $n$, $P_{s,m,k}(n)$ is an increasing function of $k$.
\end{proof}

\end{document}